\documentclass[a4paper,11pt,twoside,reqno]{amsart}

\usepackage[utf8]{inputenc}
\usepackage{amssymb,amsthm,amsmath}
\usepackage[margin=1in]{geometry}
\usepackage[plainpages=false,pdfpagelabels=true]{hyperref}
\usepackage{caption}

\newtheorem{Satz}{Theorem}[section]
\newtheorem{Prop}[Satz]{Proposition}

\newtheorem{Thm}[Satz]{Theorem}
\newtheorem{Cor}[Satz]{Corollary}
\theoremstyle{definition}

\allowdisplaybreaks[1]
\numberwithin{equation}{section}
\renewcommand{\epsilon}{\varepsilon}

\newcommand{\Q}{\ensuremath{\mathbb{Q}}}

\newcommand{\Z}{\ensuremath{\mathbb{Z}}}
\newcommand{\s}{\ensuremath{\mathbb{S}}}
\newcommand{\Null}{\operatorname{Nullity}}
\newcommand{\MWrank}{\operatorname{rank}}

\title{The Nullity of a Family of Proper Biharmonic Maps via Elliptic Curves}

\author{Anna Siffert}
\address{Department of Mathematics\\
University of M\"unster\\
48149 M\"unster, Germany}
\email{asiffert@uni-muenster.de}

\subjclass[2020]{58E20, 53C43, 11G05}
\keywords{biharmonic maps; nullity; elliptic curves; arithmetic geometry}

\begin{document}

\begin{abstract}
We prove a conjecture of Montaldo, Oniciuc and Ratto concerning the
nullity of a family of proper biharmonic maps from the flat two-torus to the round two-sphere. The proof reveals an unexpected connection between spectral geometry and arithmetic geometry. We show that the vanishing of a mixed Fourier eigenvalue produces a rational point on an explicitly defined affine quartic. By constructing an explicit polynomial isomorphism with an elliptic curve over $\Q$, the problem is reduced to the determination of a Mordell--Weil group. This yields a complete description of the rational points on the spectral curve and shows that none satisfies the positivity conditions required for a mixed Fourier mode. 
As a consequence, the mixed eigenvalues never vanish, confirming the Montaldo--Oniciuc--Ratto conjecture and proving that the nullity of every map in the family is equal to $5$.
\end{abstract}

\maketitle

\section{Introduction}
Proper biharmonic maps constitute one of the most important classes of
critical points of higher-order variational problems in differential
geometry. Let
\[
\varphi:(M,g)\longrightarrow (N,h)
\]
be a smooth map between Riemannian manifolds. Its \emph{bienergy} is
defined by
\[
E_2(\varphi)
=
\frac12\int_M |\tau(\varphi)|^2\,v_g,
\]
where \(\tau(\varphi)\) denotes the tension field of \(\varphi\). A map
is called \emph{biharmonic} if it is a critical point of the bienergy
functional, or equivalently if its bitension field vanishes. Since every
harmonic map satisfies \(\tau(\varphi)=0\), harmonic maps are
automatically biharmonic. A biharmonic map that is not harmonic is called
\emph{proper biharmonic}.

\smallskip

Biharmonic maps thus provide a natural fourth-order generalization of harmonic
maps and have been studied extensively over the last few decades; see,
for example,
\cite{BrandingMontaldoOniciucRatto2020,EellsLemaire1983,Jiang1986,LoubeauOniciuc2005,Urakawa2015}
and the references therein. Besides their intrinsic geometric interest,
they furnish important examples of nonlinear fourth-order elliptic
partial differential equations whose stability properties exhibit a rich
interaction between analysis, geometry and topology.

\smallskip

As for every variational problem, the second variation is fundamental for
the study of stability. For a biharmonic map, the Hessian of the
bienergy is represented by a formally self-adjoint fourthj-order elliptic
operator, called the \emph{Jacobi operator}. Its
spectrum governs the infinitesimal stability of the map. In particular,
its kernel consists of infinitesimal biharmonic deformations, and its
dimension is called the \emph{nullity} of the biharmonic map.

\smallskip

Montaldo, Oniciuc and Ratto studied the distinguished family
\[
\varphi_k:T^2\longrightarrow\mathbb S^2,
\qquad k\in\mathbb N_{>0},
\]
of proper biharmonic maps introduced in \cite{MOR}. Exploiting the
symmetries of the flat torus, they obtained an explicit Fourier
decomposition of the corresponding iterated Jacobi operator. As a
consequence, the computation of the nullity reduces to the analysis of a
family of explicitly computable Fourier eigenvalues. More precisely, they
proved that
\begin{equation}
\label{eq:nullity-MOR}
\Null(\varphi_k)=5+4g(k),
\end{equation}
where \(g(k)\) counts the mixed Fourier modes for which a certain
explicit eigenvalue vanishes. They conjectured that
\[
g(k)=0
\]
for every positive integer \(k\), which would imply that
\(\Null(\varphi_k)=5\) for all \(k\).

\smallskip

The aim of this paper is to prove this conjecture. The key
observation is that the mixed-mode equation admits a natural
interpretation in arithmetic geometry. After dividing by the appropriate
power of \(k\), every hypothetical mixed zero mode determines a rational
point on an explicitly defined affine quartic. A simple change of
variables reduces this quartic to a genus-one equation, and an explicit
polynomial transformation identifies it with the affine part of the
elliptic curve
\[
E:\qquad Y^2=X^3+9X^2-16X+64.
\]
The arithmetic ingredients are standard, see e.g. \cite{SilvermanAEC}. The Mordell--Weil theorem implies that
\(E(\Q)\) is finitely generated \cite{SilvermanAEC,Mordell1922}. A
certified descent computation gives rank zero, and Mazur's theorem
restricts the rational torsion subgroup \cite{MazurTorsion}. In the
present case one obtains
\[
E(\Q)\cong\Z/7\Z.
\]
The explicit inverse polynomial map then gives all rational points on the
quartic. None has both coordinates strictly positive, so none can arise
from a mixed Fourier mode.

\smallskip 

Our main result is the following:
\begin{Thm}
\label{thm:main}
For every positive integer \(k\), the map
\[
\varphi_k:T^2\longrightarrow\s^2
\]
admits no mixed Fourier zero modes. Consequently,
\[
g(k)=0
\qquad\text{and}\qquad
\Null(\varphi_k)=5.
\]
\end{Thm}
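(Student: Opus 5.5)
Taking the Montaldo--Oniciuc--Ratto reduction \eqref{eq:nullity-MOR} as given, the task is to show that no mixed Fourier mode has vanishing eigenvalue. Mixed modes are indexed by pairs of integers $(m,n)$ with $m,n\ge 1$. For such a pair, the explicit mixed eigenvalue $\lambda_k(m,n)$ from \cite{MOR} is a polynomial in $m^2$, $n^2$ and $k^2$. It is homogeneous once weights are assigned appropriately; in particular, $m^2$ and $n^2$ scale like $k^2$. I would therefore divide the equation $\lambda_k(m,n)=0$ by the appropriate power of $k$ and set
\[
u=\frac{m^2}{k^2},\qquad v=\frac{n^2}{k^2}.
\]
This produces a fixed affine quartic $C:\ F(u,v)=0$ over $\Q$, independent of $k$. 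Any mixed zero mode then gives a rational point $(u,v)\in C(\Q)$ with $u>0$ and $v>0$. The first step is to write $F$ down explicitly from the formula in \cite{MOR} and check that this homogenization is legitimate, i.e.\ that no lower-order terms in $k$ survive.

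The second step is to transform $C$ into Weierstrass form. I expect $F$ to be quadratic in one variable, say $v$, after a shift. Completing the square then gives $w^2=q(u)$, where $w$ is $(\partial F/\partial v)$ rescaled and $q$ is a quartic in $u$. Since $C$ has visible rational points (for instance points with $u=0$ or $v=0$, coming from the non-mixed modes), I can send one of them to infinity. The standard quartic-to-cubic transformation then gives explicit polynomial or rational maps
\[
\Phi:C\to E,\qquad \Psi:E\to C,
\]
with $E:\ Y^2=X^3+9X^2-16X+64$. I would verify $\Psi\circ\Phi=\mathrm{id}$ and $\Phi\circ\Psi=\mathrm{id}$ by direct substitution, away from finitely many exceptional points, and handle those exceptional points separately.

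The third step is the arithmetic of $E$. I would compute the discriminant and conductor of $E$ and count points modulo several primes of good reduction. This bounds the torsion subgroup: $\#E(\mathbb F_p)$ must be divisible by $\#E(\Q)_{\mathrm{tors}}$, and by Mazur \cite{MazurTorsion} the torsion group is already restricted to a short list. Exhibiting a rational point of order $7$, for example $(0,8)$, and checking $7P=O$ then pins the torsion down to $\Z/7\Z$. The main obstacle is proving that the rank is $0$. For this I would use the $7$-isogeny or a full $2$-descent: the cubic has a single real root and presumably no rational root, so one works over the cubic field, and the Selmer group should come out trivial. As a cross-check, I would compute $L(E,1)\neq 0$, which gives rank $0$ via Kolyvagin. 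Granting this, $E(\Q)=\{O,\pm P,\pm2P,\pm3P\}$.

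The final step is to apply $\Psi$ to these seven points, together with any exceptional points of $C$ not covered by the birational maps. This lists all of $C(\Q)$. I would then check that in every resulting point $u\le 0$ or $v\le 0$. Hence no pair $m,n\ge1$ can satisfy $\lambda_k(m,n)=0$, so $g(k)=0$, and \eqref{eq:nullity-MOR} gives $\Null(\varphi_k)=5$.
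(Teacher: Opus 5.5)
Your proposal follows essentially the same route as the paper. You normalize to $a=m^2/k^2$, $b=n^2/k^2$ and identify the resulting quartic with $E:\ Y^2=X^3+9X^2-16X+64$. You then get $E(\Q)\cong\Z/7\Z$ from a torsion bound via $\#E(\mathbb F_3)=\#E(\mathbb F_5)=7$ together with rank zero, and check that none of the six pulled-back affine points has both coordinates positive.

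Two small corrections. First, $\lambda^-_{m,n}(k)$ is not a polynomial; it has the form $\tfrac12(A-\sqrt{B})$, so you must isolate the radical and square, which only adds spurious points and is harmless here. Second, the substitution $s=a+b$ makes the quartic quadratic in $a$ with a discriminant that is already cubic in $s$. So the maps to $E$ are mutually inverse polynomial isomorphisms, and no point needs to be sent to infinity or handled as exceptional.
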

This settles the conjecture of Montaldo, Oniciuc and Ratto in affirmative.
The proof separates naturally into a spectral reduction and an arithmetic
classification. 

\medskip

\noindent\textbf{Organization:}
Section~$2$ derives the quartic associated with the mixed
eigenvalue equation. Section~$3$ identifies this quartic with the affine
part of an elliptic curve. Section~$4$ determines the Mordell--Weil group of
that curve. Section~$5$ pulls the rational points back to the quartic, and
Section~$6$ completes the proof of Theorem~\ref{thm:main}. The elementary
algebraic identities and the SageMath verifications can be found in the
appendix.

\medskip

\noindent\textbf{Acknowledgements:}
I used Deepl to improve the English.

\section{Reduction to an algebraic curve}
In this ection we provide the reduction of the problem to the study of an algebraic curve.

\smallskip

The maps considered by Montaldo, Oniciuc and Ratto are defined on the
flat torus
$$
T^2=(S^1\times S^1,d\gamma^2+d\vartheta^2)
$$
by
\[
\varphi_k(\gamma,\vartheta)
=
\left(
\frac{1}{\sqrt2}\cos(k\gamma),
\frac{1}{\sqrt2}\sin(k\gamma),
\frac{1}{\sqrt2}
\right),
\qquad k\in\mathbb N_{>0}.
\]
The maps \(\varphi_k\) form a one-parameter family of proper
biharmonic maps of the flat two-torus into the round
two-sphere.
We start by recalling the result of Montaldo--Oniciuc--Ratto which is crucial for the present manuscript:

\begin{Thm}[Montaldo--Oniciuc--Ratto {\cite{MOR}}]
\label{thm:MOR}
For every positive integer \(k\), let
\[
\varphi_k:T^2\longrightarrow\mathbb S^2
\]
denote the family of proper biharmonic maps constructed in
\cite{MOR}. Then
\[
\operatorname{Nullity}(\varphi_k)
=
5+4g(k),
\]
where
\[
g(k)
=
\#
\left\{
(m,n)\in\mathbb N_{>0}^2:
\lambda^-_{m,n}(k)=0
\right\},
\]
and the mixed eigenvalues
\(\lambda^-_{m,n}(k)\)
are given by the explicit formula
\eqref{eq:lambda-minus}.
\end{Thm}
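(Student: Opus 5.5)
The statement is quoted from \cite{MOR}, so what follows is how I would reconstruct its proof. The approach is to write the Jacobi operator of the bienergy as a constant-coefficient operator in a global frame, diagonalise it by Fourier series on $T^2$, and count the kernel mode by mode.

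\emph{Step 1: constant coefficients.} The map $\varphi_k$ depends only on $\gamma$ and is equivariant: $\gamma$-translations on $T^2$ correspond to rotations of $\mathbb S^2$ about the vertical axis. So $\varphi_k^{-1}T\mathbb S^2$ is trivialised by a global orthonormal frame $\{E_1,E_2\}$. Here $E_1$ is the unit tangent to the latitude circle $\{z=1/\sqrt2\}$ and $E_2$ is the unit normal to that circle inside $\mathbb S^2$. Writing a section as $V=f\,E_1+h\,E_2$ with $f,h\in C^\infty(T^2)$, I will insert $V$ into the standard second-variation formula for $E_2$ (Jiang's formula, with the curvature terms of $\mathbb S^2$). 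The tension field $\tau(\varphi_k)$ is a constant multiple of $E_2$, and $d\varphi_k(\partial_\gamma)$ is a constant multiple of $E_1$. Hence every connection and curvature coefficient is constant, and the Jacobi operator becomes a $2\times2$ system $\mathcal J_k(f,h)$. Its entries are polynomials in $\partial_\gamma,\partial_\vartheta$ of order at most four, with coefficients polynomial in $k$.

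\emph{Step 2: Fourier diagonalisation.} Expand $f,h$ in the basis $e^{i(m\gamma+n\vartheta)}$. Then $\mathcal J_k$ acts on each frequency $(m,n)$ as a Hermitian $2\times2$ matrix $A_{m,n}(k)$. Its entries are polynomials in $m^2,n^2,k^2$, together with first-order cross terms proportional to $ikm$. The eigenvalues are
\[
\lambda^\pm_{m,n}(k)=\tfrac12\Big(\operatorname{tr}A_{m,n}(k)\pm\sqrt{(\operatorname{tr}A_{m,n}(k))^2-4\det A_{m,n}(k)}\Big),
\]
which is the explicit formula \eqref{eq:lambda-minus}. The nullity is the total dimension of $\bigoplus_{(m,n)}\ker A_{m,n}(k)$, counted as real sections. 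Since $A_{m,n}$ depends only on $(m^2,n^2)$ up to conjugation, the frequencies $(\pm m,\pm n)$ with $m,n>0$ contribute equally.

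\emph{Step 3: counting.} There are three kinds of frequencies.
\begin{itemize}
\item Mixed modes, $m,n>0$. Here $\operatorname{tr}A_{m,n}(k)>0$, so $\lambda^+_{m,n}(k)>0$, and a zero can only come from $\lambda^-_{m,n}(k)=0$. Each such zero has multiplicity four, one for each of $(\pm m,\pm n)$ after passing to real sections. This gives the term $4g(k)$.
\item Pure modes, $(m,0)$ or $(0,n)$. I will solve $\det A=0$ directly as one-variable polynomial equations in $m$ or $n$, and check that the solutions are exactly $(0,0)$, $(\pm k,0)$ and $(0,\pm n)$ for finitely many explicit $n$ (I expect none besides $n=0$).
\item The constant and $(\pm k,0)$ modes. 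The total kernel here should be $5$. As a consistency check it should contain the three Killing fields of $\mathbb S^2$ pulled back by $\varphi_k$, plus the variations coming from moving the latitude.
\end{itemize}

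The main obstacle is Step~1 together with the bookkeeping in Step~3. The second-variation formula for the bienergy has many curvature and $\tau$-dependent terms, and a sign error in any of them changes the pure-mode count, and hence the constant $5$. I would first derive $A_{m,n}(k)$, then verify it on known kernel elements (the Killing fields) before trusting the count. The rest is elementary algebra.
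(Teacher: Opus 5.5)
The paper gives no proof of this statement; it is imported from \cite{MOR}, so there is no internal argument to compare with. Your framework is the right one: constant coefficients in the frame $(E_1,E_2)$, and one Hermitian $2\times2$ block per frequency. Write $\mu=m^2+n^2$. The block has diagonal entries $\mu^2+k^2\mu+2k^2m^2$ (on $E_1$) and $\mu^2+2k^2m^2-k^4$ (on $E_2$), and off-diagonal entries of modulus $2\sqrt2\,k|m|\mu$. Its trace and discriminant are exactly the two expressions in \eqref{eq:lambda-minus}. You can obtain this block without Jiang's formula: since $\varphi_k$ is critical, expand $E_2$ to second order along $\theta=\tfrac\pi4+\epsilon h$, $\phi=k\gamma+\epsilon f$ in spherical coordinates, then rescale $f$ by $\sqrt2$. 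However, Step~3 contains two false assertions, and both matter for the formula $5+4g(k)$.

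First, $\operatorname{tr}A_{m,n}(k)>0$ fails for mixed modes. For $k=3$, $m=n=1$ the trace is $-81+54+8=-19$, and in general it is negative whenever $m,n$ are small compared with $k$. So your argument does not exclude $\lambda^+_{m,n}(k)=0$, yet that exclusion is exactly what is needed for $g(k)$ to count only zeros of $\lambda^-$. A correct argument: the radicand in \eqref{eq:lambda-minus} equals $(k^4+k^2\mu)^2+32k^2m^2\mu^2\ge(k^4+k^2\mu)^2$. Hence $\lambda^+_{m,n}(k)\ge\mu^2+k^2(3m^2+n^2)>0$ for $(m,n)\neq(0,0)$; equivalently, the top eigenvalue dominates the $E_1E_1$ entry.

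Second, your pure-mode bookkeeping is wrong, so the consistency check you propose would fail. Since $\det A_{m,n}(k)=k^8G(a,b)$ with $a=m^2/k^2$, $b=n^2/k^2$, one gets
$\det A_{m,0}=k^8a(a-1)(a^2-2a+3)$ and $\det A_{0,n}=k^8b(b-1)(b+1)^2$.
So the pure zero modes are $(0,0)$, $(\pm k,0)$ \emph{and} $(0,\pm k)$, contrary to your expectation that no $(0,n)$ with $n\neq0$ occurs. Concretely:
\begin{itemize}
\item $A_{0,0}=\operatorname{diag}(0,-k^4)$, so the constant block contributes only $1$ (rotation about the vertical axis). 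The uniform latitude shift is a negative direction, not a Jacobi field: the bienergy of latitude circles is proportional to $\sin^2 2\theta$, which is maximal at $\theta=\pi/4$.
\item $A_{k,0}$ has eigenvalues $0$ and $6k^4$, so $(\pm k,0)$ contribute $2$ (the two horizontal rotations).
\end{itemize}
Thus the constant and $(\pm k,0)$ modes give $3$, not $5$. In fact $5$ is impossible there, since it would force $A_{k,0}=0$. The missing $2$ come from $A_{0,k}=\operatorname{diag}(2k^4,0)$, whose kernel is spanned by $\cos(k\vartheta)E_2$ and $\sin(k\vartheta)E_2$. These are $\vartheta$-dependent latitude oscillations solving $h''''=k^4h$. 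With both corrections the count $1+2+2+4g(k)=5+4g(k)$ goes through.
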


 By Theorem~\ref{thm:MOR}, the conjecture of Montaldo, Oniciuc and Ratto is
equivalent to proving that
\[
\lambda^-_{m,n}(k)\neq0
\]
for every choice of positive integers
\[
k,\qquad m,\qquad n.
\]
The explicit expression obtained in \cite{MOR} is
\begin{equation}
\label{eq:lambda-minus}
\begin{aligned}
\lambda^-_{m,n}(k)
=
\frac12\Bigl(
&
-k^4
+k^2(5m^2+n^2)
+2(m^2+n^2)^2
\\
&
-
\sqrt{
k^8
+2k^6(m^2+n^2)
+k^4(m^2+n^2)^2
+32k^2m^2(m^2+n^2)^2
}
\Bigr).
\end{aligned}
\end{equation}

The remainder of the paper is devoted to analysing the equation
\[
\lambda^-_{m,n}(k)=0..
\]
After a suitable normalization, this equation defines an affine
quartic over \(\Q\), which is shown in the next section to be
isomorphic to the affine part of an elliptic curve. The arithmetic of
this elliptic carve will ultimatily imply that on positive integers
\(k,m,n\) satisfy
\(\lambda^-_{m,n}(k)=0\).

\subsection{Normalization}
Assume that
\[
\lambda^-_{m,n}(k)=0
\]
holds.
Then
\begin{align}
&\left(
-k^4+k^2(5m^2+n^2)+2(m^2+n^2)^2
\right)^2
\nonumber\\
&\qquad=
k^8+2k^6(m^2+n^2)+k^4(m^2+n^2)^2
+32k^2m^2(m^2+n^2)^2.
\label{eq:squared-spectral}
\end{align}
Set
\begin{equation}
\label{eq:normalization}
a=\frac{m^2}{k^2},
\qquad
b=\frac{n^2}{k^2}.
\end{equation}
Dividing \eqref{eq:squared-spectral} by \(k^8\) gives
\begin{equation}
\label{eq:normalized}
\left(-1+5a+b+2(a+b)^2\right)^2
=
1+2(a+b)+(a+b)^2+32a(a+b)^2.
\end{equation}
\subsection{The affine quartic}
Expanding \eqref{eq:normalized}, collecting terms, and dividing by
the common factor $4$ gives
\begin{equation}
\label{eq:quartic}
G(a,b)=0,
\end{equation}
where
\begin{align}
G(a,b)
={}&a^4+4a^3b+6a^2b^2+4ab^3+b^4
\nonumber\\
&-3a^3-5a^2b-ab^2+b^3
\nonumber\\
&+5a^2-b^2-3a-b.
\label{eq:G}
\end{align}
The expansion is recorded in Appendix~\ref{app:quartic-derivation}.

\begin{Prop}
\label{prop:quartic-reduction}
If \(\lambda^-_{m,n}(k)=0\) for positive integers \(k,m,n\), then
\[
\left(\frac{m^2}{k^2},\frac{n^2}{k^2}\right)
\]
is a rational point of the affine quartic \(G(a,b)=0\). In particular,
its coordinates satisfy
\[
a>0,\qquad b>0,
\]
and both \(a\) and \(b\) are rational squares.
\end{Prop}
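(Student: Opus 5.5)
The plan is to derive the quartic directly from the vanishing of \eqref{eq:lambda-minus}; the positivity and square conditions are then immediate from \eqref{eq:normalization}. We only need the forward implication, so squaring causes no difficulty: spurious solutions of the squared equation (those where the rational part of \eqref{eq:lambda-minus} is negative) are harmless here.

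\emph{Squaring and normalizing.} Suppose \(\lambda^-_{m,n}(k)=0\) with \(k,m,n\in\mathbb N_{>0}\). Then
\[
-k^4+k^2(5m^2+n^2)+2(m^2+n^2)^2=\sqrt{k^8+2k^6(m^2+n^2)+k^4(m^2+n^2)^2+32k^2m^2(m^2+n^2)^2}.
\]
Squaring both sides gives \eqref{eq:squared-spectral}. Both sides of \eqref{eq:squared-spectral} are homogeneous of degree \(8\) in \((k,m,n)\), since only even powers of \(m,n\) appear and every monomial has total degree \(8\). Since \(k\neq 0\), we divide by \(k^8\) and substitute \(a=m^2/k^2\), \(b=n^2/k^2\). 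This yields \eqref{eq:normalized}, with \(a,b\in\Q\).

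\emph{Expanding to \(G\).} Write \(s=a+b\) and \(P=-1+5a+b+2s^2\). Expanding \(P^2\) and subtracting the right-hand side of \eqref{eq:normalized} gives
\[
(5a+b)^2+4s^4-2(5a+b)-2s-5s^2+4s^2(5a+b)-32as^2 .
\]
The constant terms \(1\) cancel. The terms \(-4s^2\) from \(P^2\) and \(-s^2\) from the right-hand side combine to \(-5s^2\), and the two \(s^2\) terms in \(a,b\) combine as \(4s^2(5a+b)-32as^2=4s^2(b-3a)\). Every coefficient is divisible by \(4\), so dividing by \(4\) gives
\[
s^4+s^2(b-3a)+\tfrac14\bigl((5a+b)^2-5s^2\bigr)-\tfrac14\bigl(2(5a+b)+2s\bigr).
\]
We now expand each piece:
\begin{itemize}
\item \(s^2(b-3a)=-3a^3-5a^2b-ab^2+b^3\);
\item \(\tfrac14\bigl((5a+b)^2-5(a+b)^2\bigr)=5a^2-b^2\);
\item \(-\tfrac14\bigl(12a+4b\bigr)=-3a-b\);
\item \(s^4\) is the binomial quartic.
\end{itemize}
Together these reproduce \eqref{eq:G} term by term, so \((a,b)\) satisfies \(G(a,b)=0\). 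This bookkeeping is the only real work and the one place an error could creep in; the details are recorded in Appendix~\ref{app:quartic-derivation}.

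\emph{Positivity and squares.} Since \(m,n,k\) are positive integers, \(a=(m/k)^2>0\) and \(b=(n/k)^2>0\). Both are squares of the rationals \(m/k\) and \(n/k\). This completes the plan.
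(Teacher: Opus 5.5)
Your proof is correct and follows the paper's argument exactly. Like the paper, you square $\lambda^-_{m,n}(k)=0$, divide by $k^8$, note that the difference of the two sides of the normalized equation is $4G(a,b)$, and read off positivity and the square property from $a=(m/k)^2$, $b=(n/k)^2$; your grouping via $s=a+b$ and $5a+b$ is a more transparent version of the appendix expansion, and it checks out term by term.
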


\begin{proof}
Equation~\eqref{eq:normalized} is a necessary consequence of
\(\lambda^-_{m,n}(k)=0\), and its left-hand side minus its right-hand side
is \(4G(a,b)\). The remaining assertions follow from
\eqref{eq:normalization}.
\end{proof}

The converse of Proposition~\ref{prop:quartic-reduction} is neither
needed nor asserted: squaring may introduce additional rational points.
It is therefore enough to determine all rational points of the quartic
and show that none lies in the region \(a>0\), \(b>0\).

\section{From the quartic to an elliptic curve}
The substitution
\[
s=a+b
\]
separates the symmetric variable \(a+b\) from the individual coordinates.
Replacing \(b\) by \(s-a\) in \(G(a,b)=0\) gives
\begin{equation}
\label{eq:quadratic-a}
4a^2+(-4s^2+2s-2)a+s^4+s^3-s^2-s=0.
\end{equation}
Thus the quartic becomes quadratic in \(a\). Completing the square with
\begin{equation}
\label{eq:v-definition}
v=4a-2s^2+s-1
\end{equation}
yields
\begin{equation}
\label{eq:genus-one}
v^2=-8s^3+9s^2+2s+1.
\end{equation}
Finally, setting
\begin{equation}
\label{eq:XY-sv}
X=-8s,
\qquad
Y=8v
\end{equation}
transforms \eqref{eq:genus-one} into
\begin{equation}
\label{eq:elliptic-curve}
E:\qquad
Y^2=X^3+9X^2-16X+64.
\end{equation}
The discriminant of thiss Weierstrass equation is
\[
-2^{19}\cdot13\neq0,
\]
so its smooth projective completion is an elliptic curve over \(\Q\).
\begin{Thm}
\label{thm:affine-isomorphism}
The affine quartic \(G(a,b)=0\) is isomorphic over \(\Q\) to the affine
Weierstrass curve \eqref{eq:elliptic-curve}. The isomorphism is
\begin{equation}
\label{eq:forward-map}
\begin{aligned}
X&=-8(a+b),\\
Y&=-16a^2-32ab-16b^2+40a+8b-8,
\end{aligned}
\end{equation}
and its inverse is
\begin{equation}
\label{eq:inverse-map}
\begin{aligned}
a&=\frac{X^2+4X+4Y+32}{128},\\
b&=-\frac{X^2+20X+4Y+32}{128}.
\end{aligned}
\end{equation}
\end{Thm}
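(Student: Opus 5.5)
The plan is to show that the two polynomial maps \eqref{eq:forward-map} and \eqref{eq:inverse-map} are well defined between the two curves and are mutually inverse. Since both are given by polynomials with rational coefficients, this yields an isomorphism of affine varieties over \(\Q\). The whole argument is the composite of the three elementary substitutions of this section, each of which is invertible over \(\Q\):
\[
(a,b)\mapsto(a,s)=(a,a+b),\qquad (a,s)\mapsto(s,v)=(s,4a-2s^2+s-1),\qquad (s,v)\mapsto(X,Y)=(-8s,8v).
\]
Composing them gives \(X=-8(a+b)\) and
\[
Y=8\bigl(4a-2(a+b)^2+(a+b)-1\bigr)=-16a^2-32ab-16b^2+40a+8b-8,
\]
which is exactly \eqref{eq:forward-map}.

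\textbf{Step 1 (forward map lands on \(E\)).} The first substitution replaces \(b\) by \(s-a\) and turns \(G(a,b)\) into the left-hand side of \eqref{eq:quadratic-a}. Next I will check the polynomial identity
\[
16\cdot\bigl(4a^2+(-4s^2+2s-2)a+s^4+s^3-s^2-s\bigr)=v^2-(-8s^3+9s^2+2s+1),\qquad v=4a-2s^2+s-1.
\]
This is completing the square. The \(a\)-terms match because \((4a+c)^2=16a^2+8ca+c^2\) with \(c=-2s^2+s-1\) and \(8c=4(-4s^2+2s-2)\). It then remains to check
\[
c^2-16(s^4+s^3-s^2-s)=8s^3-9s^2-2s-1,
\]
which follows by expanding \(c^2=4s^4-4s^3+5s^2-2s+1\). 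Hence \(16\,G(a,b)=v^2-(-8s^3+9s^2+2s+1)\) identically. Finally, multiplying \eqref{eq:genus-one} by \(64\) and substituting \(s=-X/8\), \(v=Y/8\) gives \eqref{eq:elliptic-curve}. This gives the identity
\[
Y^2-(X^3+9X^2-16X+64)=1024\,G(a,b)
\]
under \eqref{eq:forward-map}, so the forward map sends the quartic into \(E\).

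\textbf{Step 2 (inverse map lands on the quartic).} Each substitution can be solved over \(\Q\): \(s=-X/8\), \(v=Y/8\), \(a=(v+2s^2-s+1)/4\), and \(b=s-a\). Substituting gives
\[
a=\frac{Y/8+X^2/32+X/8+1}{4}=\frac{X^2+4X+4Y+32}{128},
\]
and \(b=-X/8-a\), which agrees with \eqref{eq:inverse-map}. By the same identity read backwards, the composite satisfies \(G(a(X,Y),b(X,Y))=\bigl(Y^2-X^3-9X^2+16X-64\bigr)/1024\), so \eqref{eq:inverse-map} maps the affine curve \(E\) into \(\{G=0\}\).

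\textbf{Step 3 (mutually inverse).} The two composites are the identity as polynomial maps on the ambient affine planes \(\mathbb{A}^2\), not merely on the curves, because each elementary substitution is an invertible triangular (de Jonquières-type) automorphism of \(\mathbb{A}^2_{\Q}\). Therefore they restrict to mutually inverse morphisms between the two curves. This bypasses any need to argue about irreducibility or reducedness of \(G\).

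The only real obstacle is the bookkeeping in the completing-the-square identity of Step 1. I will record the expansion explicitly, or check it in SageMath as in the appendix. Everything else is a formal consequence of the fact that the composite is a polynomial automorphism of the plane carrying \(1024\,G\) to the Weierstrass polynomial.
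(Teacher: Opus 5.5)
Your approach is the paper's. Section~3 derives the curve through exactly your three substitutions \(b=s-a\), \(v=4a-2s^2+s-1\), \((X,Y)=(-8s,8v)\). The proof then checks by substitution that the Weierstrass polynomial pulls back to a constant times \(G\) and that the two maps compose to the identity. Your remark that each step is a triangular automorphism of \(\mathbb{A}^2_{\Q}\) is a cleaner way to see the second point. Since the pulled-back constant is a unit in \(\Q\), it does give \(\Q[X,Y]/(Y^2-X^3-9X^2+16X-64)\cong\Q[a,b]/(G)\).

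The bookkeeping in Step~1 is wrong as written, though.

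\textbf{The multiplier.} Compare \(a^2\)-coefficients: \(16\cdot 4a^2=64a^2\) cannot equal the \(16a^2\) in \(v^2=(4a+c)^2\). Your own check \(8c=4(-4s^2+2s-2)\) shows the multiplier is \(4\), not \(16\).

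\textbf{The residual identity.} Your claimed identity fails in both factor and sign. The correct statements are
\[
c^2-4(s^4+s^3-s^2-s)=-8s^3+9s^2+2s+1,\qquad 4\,G(a,s-a)=v^2-(-8s^3+9s^2+2s+1).
\]

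\textbf{The final constant.} Consequently
\[
Y^2-(X^3+9X^2-16X+64)=64\bigl(v^2+8s^3-9s^2-2s-1\bigr)=256\,G(a,b),
\]
and \(G(a(X,Y),b(X,Y))=(Y^2-X^3-9X^2+16X-64)/256\), exactly as in the paper, not \(1024\).

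The argument only needs the constant to be a nonzero rational, so the logic is unaffected once these are corrected. As it stands, though, the expansion you say you will ``record explicitly'' would not check out.
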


\begin{proof}
Substitution gives the polynomial identities
\begin{equation}
\label{eq:forward-identity}
Y(a,b)^2-X(a,b)^3-9X(a,b)^2+16X(a,b)-64
=
256G(a,b)
\end{equation}
and
\begin{equation}
\label{eq:inverse-identity}
G(a(X,Y),b(X,Y))
=
\frac{Y^2-X^3-9X^2+16X-64}{256}.
\end{equation}
Direct substitution also shows that the two maps compose to the identity.
Hence they restrict to mutually inverse morphisms of the affine curves.
The details are recorded in Appendix~\ref{app:map-verification}.
\end{proof}

\section{Arithmetic of the elliptic curve}
We now determine \(E(\Q)\). Standard references for the arithmetic of
elliptic curves include \cite{Cassels,Cremona,SilvermanAEC}.

\subsection{Minimal models, Tate's algorithm, and the conductor}
An integral Weierstrass equation for an elliptic curve is called
\emph{global minimal} if its discriminant has minimal \(p\)-adic
valuation among all integral Weierstrass equations for the curve, for
every prime \(p\). A global minimal model is unique up to an integral
admissible change of variables; see \cite{SilvermanAEC}.

\smallskip

Tate's algorithm is a finite local procedure applied prime by prime to an
integral Weierstrass equation. It determines a minimal local equation,
the reduction type, the valuation of the minimal discriminant, the
component group, and the local conductor exponent; see e.g.
\cite{SilvermanAEC,TateAlgorithm1975}. For the integral equation
\eqref{eq:elliptic-curve}, the algorithm gives the global minimal model
\begin{equation}
\label{eq:minimal-model}
E_{\min}:\qquad
y^2+xy+y=x^3-x^2-3x+3,
\end{equation}
with
\[
\Delta_{\min}=-2^7\cdot13.
\]

The \emph{conductor} of \(E\) is
\[
N_E=\prod_p p^{f_p},
\]
where \(f_p\) is the local conductor exponent. The exponent is zero at
primes of good reduction and measures the severity of bad reduction at
the remaining primes. Unlike the discriminant of a particular equation,
the conductor is an isomorphism invariant of the elliptic curve. In the
present case,
\begin{equation}
\label{eq:conductor}
N_E=26.
\end{equation}

\subsection{Rank and torsion}

The Mordell--Weil theorem states that \(E(\Q)\) is a finitely generated
abelian group:
\begin{equation}
\label{eq:Mordell-Weil-decomposition}
E(\Q)\cong E(\Q)_{\mathrm{tors}}\oplus\Z^r,
\end{equation}
where \(r=\MWrank E(\Q)\); see
\cite{Mordell1922,SilvermanAEC}. A straightforward computation gives
\begin{equation}
\label{eq:rank-zero}
\MWrank E(\Q)=0.
\end{equation}
Thus, every rational point of \(E\) is torsion.

Mazur's theorem gives the complete list of torsion groups that can occur
for elliptic curves over \(\Q\) \cite{MazurTorsion}:
\[
E(\Q)_{\mathrm{tors}}\cong
\begin{cases}
\Z/N\Z,
&1\leq N\leq10\text{ or }N=12,\\
\Z/2\Z\oplus\Z/2N\Z,
&1\leq N\leq4.
\end{cases}
\]
For the present curve, reduction modulo the good primes \(3\) and \(5\)
gives
\[
\#E(\mathbb F_3)=7,
\qquad
\#E(\mathbb F_5)=7.
\]
Reduction at a prime \(p\) of good reduction is injective on rational
torsion of order coprime to \(p\). Using the two good primes \(3\) and
\(5\), for which
\[
\#E(\mathbb F_3)=\#E(\mathbb F_5)=7,
\]
we conclude that
\[
\#E(\Q)_{\mathrm{tors}}\mid 7.
\]
On the other hand,
the point
\[
P=(-8,-16)
\]
has exact order \(7\). Hence
\[
E(\Q)_{\mathrm{tors}}\cong\Z/7\Z.
\]

\begin{Thm}
\label{thm:rational-points-E}
The Mordell--Weil group of \(E\) is
\[
E(\Q)\cong\Z/7\Z.
\]
It rational points are
\[
E(\Q)=
\left\{
O,\,
(-8,\pm16),\,
(0,\pm8),\,
(8,\pm32)
\right\}.
\]
\end{Thm}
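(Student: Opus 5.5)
Given the rank-zero result \eqref{eq:rank-zero} and the torsion bound $\#E(\Q)_{\mathrm{tors}}\mid 7$ from reduction at the good primes $3$ and $5$, the Mordell--Weil decomposition \eqref{eq:Mordell-Weil-decomposition} gives $E(\Q)=E(\Q)_{\mathrm{tors}}$ of order dividing $7$. So it suffices to exhibit one rational point $P\neq O$. It then has order $7$, hence $E(\Q)=\langle P\rangle\cong\Z/7\Z$. The remaining work is to compute the multiples of $P=(-8,-16)$ and check that they are the six affine points listed.

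First I verify that the listed points lie on $E$. For $(-8,\pm16)$: $-512+576+128+64=256$. For $(0,\pm8)$: $64$. For $(8,\pm32)$: $512+576-128+64=1024$. Each right-hand side is the square of the stated $Y$.

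Next I compute multiples of $P$ with the chord--tangent law for $Y^2=X^3+9X^2-16X+64$. Here $a_2=9$, so the third intersection of a line of slope $\lambda$ has $X_3=\lambda^2-9-X_1-X_2$.
\begin{itemize}
\item[$2P$:] Implicit differentiation gives $\lambda=(3X^2+18X-16)/(2Y)$. At $P$ this is $(192-144-16)/(-32)=-1$. Then $X_3=1-9+16=8$. The line through $P$ is $Y=-X-24$, which at $X=8$ gives $-32$, so $2P=(8,32)$.
\item[$3P$:] The chord through $P$ and $2P$ has slope $48/16=3$. Then $X_3=9-9-0=0$. The line is $Y=3X+8$, giving $8$ at $X=0$, so $3P=(0,-8)$.
\item[$4P$:] Since $-3P=(0,8)$, it suffices to check $4P=-3P$, i.e.\ that $7P=O$. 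I would instead confirm it directly: the chord through $P$ and $3P$ has slope $8/8=1$, so $X_3=1-9+8=0$. The line is $Y=X-8$, giving $-8$ at $X=0$, so $4P=(0,8)=-3P$.
\end{itemize}
This proves $7P=O$ with $P\neq O$. Since $7$ is prime, $P$ has exact order $7$. Together with $\#E(\Q)\mid 7$ this gives
\[
E(\Q)=\{O,\pm P,\pm 2P,\pm 3P\},
\]
which is exactly the listed set.

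The main obstacle is not this computation but the input \eqref{eq:rank-zero}, which the paper only calls "straightforward". To make it rigorous I would rely on the conductor $N_E=26$ and the minimal model \eqref{eq:minimal-model}. I would identify $E$ with the curve of conductor $26$ in Cremona's tables that has torsion $\Z/7\Z$, namely isogeny class 26b, which has analytic rank $0$. Since $L(E,1)\neq0$, Kolyvagin's theorem (with modularity) gives rank $0$. Alternatively, a $7$-isogeny or $2$-descent certified in SageMath would suffice, recorded in the appendix. The torsion bound itself is elementary: one counts points mod $3$ and mod $5$ and uses injectivity of reduction on prime-to-$p$ torsion.
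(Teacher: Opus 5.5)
Your proposal is correct and follows the same route as the paper. You combine rank zero with the reduction bound $\#E(\Q)_{\mathrm{tors}}\mid 7$ at the good primes $3$ and $5$, and exhibit $P=(-8,-16)$ of order $7$. Your hand computation $2P=(8,32)$, $3P=(0,-8)$, $4P=(0,8)=-3P$ checks out and makes explicit what the paper leaves to SageMath. Your justification of rank zero is a sound replacement for the paper's bare appeal to a computation: the minimal model is Cremona's curve 26b1, which has analytic rank $0$, and Kolyvagin's theorem then gives rank $0$.
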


\begin{proof}
Equation~\eqref{eq:rank-zero} shows that all rational points are torsion.
The reduction argument and the point \(P=(-8,-16)\) show that the torsion
group has order \(7\). The displayed points are the multiples of \(P\),
together with the identity \(O\). The computation is reproduced
in Appendix~\ref{app:sage-verification}.
\end{proof}

\section{Rational points of the quartic}
Applying the inverse map \eqref{eq:inverse-map} to the six affine rational
points of \(E\) gives the complete affine rational-point set of the
quardic.
\begin{table}[ht]
\centering
\begin{tabular}{c|c}
\hline
Point on \(E(\Q)\) & Corresponding point on \(G(a,b)=0\)\\
\hline
\((-8,-16)\) & \((0,1)\)\\
\((-8,16)\) & \((1,0)\)\\
\((0,-8)\) & \((0,0)\)\\
\((0,8)\) & \(\left(\frac12,-\frac12\right)\)\\
\((8,-32)\) & \((0,-1)\)\\
\((8,32)\) & \((2,-3)\)\\
\hline
\end{tabular}
\caption{The affine rational points of the quartic.}
\label{tab:rational-points}
\end{table}

\begin{Thm}
\label{thm:rational-points-quartic}
The affine quartic \(G(a,b)=0\) has precisely the six rational points
listed in Table~\ref{tab:rational-points}.
\end{Thm}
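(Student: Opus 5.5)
The plan is to transport the known rational points of \(E\) back through the explicit isomorphism of Theorem~\ref{thm:affine-isomorphism}. That theorem gives mutually inverse polynomial maps between the affine quartic \(G(a,b)=0\) and the affine Weierstrass curve \eqref{eq:elliptic-curve}, both with rational coefficients. A rational point \((a,b)\) of the quartic is therefore sent by \eqref{eq:forward-map} to a rational point \((X,Y)\) of the affine part of \(E\). By the inverse identity, \((a,b)\) is recovered from \((X,Y)\) via \eqref{eq:inverse-map}. So there is a bijection between \(\{(a,b)\in\Q^2: G(a,b)=0\}\) and \(E(\Q)\setminus\{O\}\). The point at infinity \(O\) has no affine preimage, since the forward map is polynomial and always lands in the affine chart.

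The steps are as follows.

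\textbf{Step 1.} By Theorem~\ref{thm:rational-points-E}, \(E(\Q)\setminus\{O\}\) consists of exactly the six points \((-8,\pm16)\), \((0,\pm8)\), \((8,\pm32)\).

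\textbf{Step 2.} Apply \eqref{eq:inverse-map} to each of these six points. For example, \((X,Y)=(-8,-16)\) gives
\[
a=\frac{64-32-64+32}{128}=0,\qquad b=-\frac{64-160-64+32}{128}=1.
\]
The other five points are handled the same way and produce the remaining entries of Table~\ref{tab:rational-points}. As a sanity check, I would substitute each resulting pair into \(G\) to confirm that \(G(a,b)=0\). I would also apply \eqref{eq:forward-map} to each pair to confirm that it returns the original \((X,Y)\).

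\textbf{Step 3.} Injectivity of the inverse map, which follows from its having the forward map as a left inverse, shows that the six pairs are pairwise distinct. Surjectivity onto the rational points of the quartic follows from Step 1 together with the forward map. Hence the quartic has precisely these six rational points.

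The computations themselves are routine. The only point needing care is the logical one: a bijection on rational points requires both compositions to be the identity as polynomial maps on the curves, and not merely the identities \eqref{eq:forward-identity} and \eqref{eq:inverse-identity}. This is asserted in the proof of Theorem~\ref{thm:affine-isomorphism}, so I would invoke it directly. As a quick independent check, one can verify it through \eqref{eq:v-definition} and \eqref{eq:XY-sv}. These give \(s=-X/8\) and \(a=(v+2s^2-s+1)/4\) with \(v=Y/8\), which reproduces the stated formula for \(a\), and \(b=s-a\) then gives the stated formula for \(b\).
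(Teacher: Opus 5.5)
Your proposal is correct and matches the paper's proof: you take the six affine points of \(E(\Q)\) from Theorem~\ref{thm:rational-points-E}, apply the inverse map \eqref{eq:inverse-map} to get Table~\ref{tab:rational-points}, and use that the maps of Theorem~\ref{thm:affine-isomorphism} are mutually inverse to rule out any further rational points. Your extra check, rederiving \eqref{eq:inverse-map} from \eqref{eq:v-definition} and \eqref{eq:XY-sv}, is sound and shows that the maps \((a,b)\mapsto(s,v)\mapsto(X,Y)\) are invertible polynomial changes of coordinates of the whole plane, which settles the logical point you flagged.
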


\begin{proof}
Theorem~\ref{thm:rational-points-E} gives every rational point of \(E\).
The point \(O\) is the point at infinity and has no affine coordinates.
Applying the inverse polynomial map to the remaining six points gives
Table~\ref{tab:rational-points}. Since the maps in
Theorem~\ref{thm:affine-isomorphism} are mutually inverse, no further
affine rational points exist.
\end{proof}

None of the six points in Table~\ref{tab:rational-points} has both
coordinates strictly positive. More explicitly,
\[
\begin{array}{c|c}
(a,b)&\text{obstruction}\\
\hline
(0,-1)&a=0,\ b<0\\
(0,0)&a=b=0\\
(0,1)&a=0\\
(\frac12,-\frac12)&b<0\\
(1,0)&b=0\\
(2,-3)&b<0.
\end{array}
\]

\begin{Cor}
\label{cor:no-admissible-points}
The quartic \(G(a,b)=0\) has no rational point satisfying
\[
a>0,\qquad b>0.
\]
In particular, it has no point satisfying the stronger requirement that
\(a\) and \(b\) are positive rational squares.
\end{Cor}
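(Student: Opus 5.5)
The corollary follows directly from Theorem~\ref{thm:rational-points-quartic}. I will argue by contradiction: suppose $(a,b)\in\Q^2$ satisfies $G(a,b)=0$ with $a>0$ and $b>0$. By Theorem~\ref{thm:rational-points-quartic}, $(a,b)$ must be one of the six points in Table~\ref{tab:rational-points}. I will then go through these six points one by one, as in the obstruction table above.

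\begin{itemize}
\item The points $(0,-1)$, $(0,0)$ and $(0,1)$ have $a=0$, so they violate $a>0$.
\item The point $(1,0)$ has $b=0$, so it violates $b>0$.
\item The points $(\tfrac12,-\tfrac12)$ and $(2,-3)$ have $b<0$, so they violate $b>0$.
\end{itemize}

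This contradicts the assumption, so the first assertion holds. The second assertion is immediate, because a point whose coordinates are positive rational squares in particular has $a>0$ and $b>0$.

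As a sanity check, I would substitute each of the six points into \eqref{eq:G}. For example, $G(2,-3)$: the quartic part is $(a+b)^4=1$, the cubic part is $-24+60-18-27=-9$, and the remaining terms give $20-9-6+3=8$, for a total of $1-9+8=0$. This is only a check and is not needed for the logic.

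The real weight of the argument lies upstream, in two places.

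\begin{itemize}
\item Theorem~\ref{thm:affine-isomorphism} must be a genuine bijection on \emph{affine rational} points. This requires the inverse map \eqref{eq:inverse-map} to be polynomial with rational coefficients, so that no rational point of the quartic can correspond to the point at infinity or to a non-rational point of $E$. Because both maps are polynomial and mutually inverse, this holds.
\item The rank-zero statement \eqref{eq:rank-zero} must hold.
\end{itemize}

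I expect the rank computation to be the only substantive obstacle. Taking Theorem~\ref{thm:rational-points-E} as given, I would support it by noting that the minimal model has conductor $26$. This identifies $E$ with the curve of Cremona label 26b1, which has rank $0$ and torsion $\Z/7\Z$. For an independent certification, I would do a $7$-isogeny descent, or alternatively check $L(E,1)\neq0$ and invoke Kolyvagin's theorem.
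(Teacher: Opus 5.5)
Your proof is correct and follows the paper's argument: the corollary is read off from Theorem~\ref{thm:rational-points-quartic} by checking each of the six points of Table~\ref{tab:rational-points} against $a>0$, $b>0$, exactly as in the obstruction table preceding the corollary, and the square condition is a special case of positivity. Your extra remarks are accurate but not needed for this step: the check $G(2,-3)=0$, and the identification of $E$ with Cremona's curve 26b1, which has rank $0$ and torsion $\Z/7\Z$ and so matches the paper's minimal model and conductor.
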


\section{Proof of the main theorem}

\begin{proof}[Proof of Theorem~\ref{thm:main}]
Suppose that a mixed Fourier zero mode exists. By
Proposition~\ref{prop:quartic-reduction}, it determines a rational point
\[
(a,b)=\left(\frac{m^2}{k^2},\frac{n^2}{k^2}\right)
\]
of \(G(a,b)=0\) with \(a>0\) and \(b>0\). This contradicts
Corollary~\ref{cor:no-admissible-points}. Therefore \(g(k)=0\) for every
positive integer \(k\). The nullity formula \eqref{eq:nullity-MOR} then
gives
\[
\Null(\varphi_k)=5,
\]
whence the cliam.
\end{proof}

To summarize, the proof translates a spectral question for a fourth-order geometric
variational problem into the arithmetic of a single elliptic curve. The
decisive feature is not merely that a genus-one curve appears, but that
the spectral quartic admits explicit polynomial coordinates on an affine
Weierstrass model. This makes it possible to transport the complete
Mordell--Weil computation directly back to the normalized Fourier
parameters.

\appendix

\section{Algebraic and computational verification}

\subsection{Derivation of the quartic}
\label{app:quartic-derivation}

A direct expansion gives
\begin{align*}
&\left(-1+5a+b+2(a+b)^2\right)^2\\
&\quad-\left(1+2(a+b)+(a+b)^2+32a(a+b)^2\right)\\
&=
4a^4+16a^3b+24a^2b^2+16ab^3+4b^4\\
&\quad-12a^3-20a^2b-4ab^2+4b^3
+20a^2-4b^2-12a-4b\\
&=4G(a,b).
\end{align*}
Substituting \(b=s-a\) gives
\[
G(a,s-a)
=
4a^2+(-4s^2+2s-2)a+s^4+s^3-s^2-s.
\]
The discriminant of this quadratic in \(a\) is
\[
-32s^3+36s^2+8s+4
=
4(-8s^3+9s^2+2s+1),
\]
which yields \eqref{eq:genus-one} after completing the square.

\subsection{Verification of the polynomial maps}
\label{app:map-verification}

Substitution of \eqref{eq:forward-map} gives
\[
Y(a,b)^2-X(a,b)^3-9X(a,b)^2+16X(a,b)-64
=
256G(a,b).
\]
Conversely, substitution of \eqref{eq:inverse-map} gives
\[
G(a(X,Y),b(X,Y))
=
\frac{Y^2-X^3-9X^2+16X-64}{256}.
\]
The compositions satisfy
\[
a(X(a,b),Y(a,b))=a,
\qquad
b(X(a,b),Y(a,b))=b,
\]
and
\[
X(a(X,Y),b(X,Y))=X,
\qquad
Y(a(X,Y),b(X,Y))=Y.
\]

\subsection{SageMath verification}
\label{app:sage-verification}

The symbolic identities and the arithmetic computations were verified in
SageMath. The following commands reproduce the principal arithmetic
data:
\begin{verbatim}
E = EllipticCurve(QQ,[0,9,0,-16,64])
E.global_minimal_model()
E.discriminant()
E.conductor()
E.rank(proof=True)
E.torsion_subgroup()
E.torsion_points()
\end{verbatim}
They return the minimal model \eqref{eq:minimal-model}, conductor \(26\),
rank \(0\), torsion group \(\Z/7\Z\), and the seven rational points listed
in Theorem~\ref{thm:rational-points-E}.
\bibliographystyle{plain}
\bibliography{mybib}

\end{document}